%
%
%

\documentclass{article}

\usepackage{latexsym}
\usepackage{amssymb}
\usepackage{amsthm}
\usepackage{type1cm}        
%
\usepackage{makeidx}         
\usepackage{graphicx}        
\usepackage{multicol}        
\usepackage[bottom]{footmisc}

\usepackage{enumitem}

\usepackage{newtxtext}       %
\usepackage{newtxmath}       


\makeindex             

\newenvironment{@abssec}[1]{%
    \if@twocolumn

      \section*{#1}%
    \else

      \vspace{.05in}\footnotesize
      \parindent .2in
 {\upshape\bfseries #1. }\ignorespaces
    \fi}

    {\if@twocolumn\else\par\vspace{.1in}\fi}

\newenvironment{keywords}{\begin{@abssec}{\keywordsname}}{\end{@abssec}}

\newenvironment{AMS}{\begin{@abssec}{\AMSname}}{\end{@abssec}}

\newcommand\keywordsname{Key words}
\newcommand\AMSname{AMS subject classifications}
\newcommand\AMname{AMS subject classification}

\newcommand\setbld[2]{\left\{ #1 \;\middle |\; #2\right\}}

\newcommand\restr[2]{{
\left.\kern-\nulldelimiterspace 
#1 
\vphantom{|} 
\right|_{#2} 
}}
\newcommand\jump[1]{\left\llbracket #1\right\rrbracket}

\newtheorem{mainthm}{Theorem}

\newtheorem{theorem}{Theorem}[section]

\newtheorem{remark}[theorem]{Remark}
\newtheorem{definition}[theorem]{Definition}

\numberwithin{equation}{section}

\begin{document}

\title{A miscellanea of qualitative and symmetry properties of the solutions to the two-phase Serrin's problem}
\author{Lorenzo Cavallina \thanks{The author is partially supported by JSPS KAKENHI Grant Number JP22K13935 and  JP21KK0044, JP23H04459.}}
\date{}
%
%
\maketitle

\abstract{
This paper investigates the solutions to the two-phase Serrin’s problem, an overdetermined boundary value problem motivated by shape optimization. Specifically, we study the torsional rigidity of composite beams, where two distinct materials interact, and examine the properties of the optimal configurations (critical shapes) under volume constraints. We first show that such a shape optimization problem admits no local minimizers. Then, using the method of moving planes, we show that the solutions exhibit no extended or narrow branches (“tentacles”) away from the core. We then show that the outer boundary of a solution cannot exhibit flat parts and that the only configuration whose outer boundary contains a portion of a sphere is the one given by concentric balls. Finally, we establish that concentric balls are the only admissible configurations that solve the two-phase Serrin's problem for two distinct sets of conductivity values.}

\begin{keywords}
two-phase, overdetermined problem, shape-optimization problem, symmetry, method of moving planes, qualitative properties of the solutions
\end{keywords}

\begin{AMS}
35J15, 35N25, 35Q93.
\end{AMS}

\section{Introduction}

Let $\Omega$ be a bounded domain of $\mathbb{R}^N$ ($N\ge 2$) of class (at least) $C^{1,\gamma}$ for $\gamma\in(0,1]$ and let $D\subset\overline{D}\subset \Omega$ be a nonempty bounded open set. Throughout this paper, we will be concerned with the following overdetermined problem (which is a generalization of the famous Serrin's overdetermined problem \cite{serrin1971}):
\begin{equation}\label{odp}
    \begin{cases}
        -{\rm div}(\sigma\nabla u)=1\quad\text{in }\Omega,\\
        u=0\quad\text{on }\partial\Omega,\\
        \partial_n u = c \quad\text{on }\partial\Omega.
    \end{cases}
\end{equation}
Here $\partial_n$ denotes the normal derivative with respect to the outward unit normal vector $n\in C^{\gamma}(\partial\Omega,\mathbb{R}^N)$, $c$ is a given real constant, and $\sigma$ is the piece-wise constant function defined as 
\begin{equation}\label{sigma def}
    \sigma(x):=\begin{cases}
        \sigma_c \quad \text{if }x\in D\;\text{ (core)},\\
        1\quad \text{if }x\in \Omega\setminus D \;\text{ (shell)},
    \end{cases}
\end{equation}
for some given positive constant $\sigma_c\ne 1$. 
Notice that, problem \eqref{odp} is \emph{overdetermined} as we are imposing two distinct boundary conditions on $\partial\Omega$ at the same time. As a result, problem \eqref{odp} will only admit a solution $u$ for some \emph{suitable pairs} $(D,\Omega)$. Throughout this paper, when no confusion arises, we will also refer to such pairs $(D,\Omega)$ as the \emph{solutions} to the overdetermined problem \eqref{odp}. 

When $D$ is just an open set, one can give meaning to problem \eqref{odp} via the following weak formulation: we say that a function $u\in H_0^1$ solves \eqref{odp} if it satisfies 
 \begin{equation}\label{u weak form}
     \int_\Omega \sigma \nabla u\cdot\nabla\varphi = \int_\Omega \varphi + c\int_{\partial \Omega}\varphi
 \end{equation}
for all $\varphi\in H^1(\Omega)$.
Even though less immediate to see at first glance, the above formulation still yields an overdetermined problem, as we have also imposed the value of the trace of $u$ on the boundary $\partial\Omega$ by requiring $u\in H_0^1(\Omega)$. 
\begin{remark}[Reformulation as a transmission problem]
Independently of the regularity of the interface $\partial D$, equation $-{\rm{div}}(\sigma\nabla u)=1$ holds in the classical sense point-wise away from the interface. That is to say, $u$ satisfies $-\sigma_c\Delta u=1$ in $D$ and $-\Delta u= 1$ in $\Omega\setminus\overline{D}$ (this follows by localizing the equation in each phase by choosing suitable test functions in the weak form \eqref{u weak form}). Moreover, if $\partial D$ is sufficiently smooth, $u$ satisfies the following \emph{transmission conditions} at the interface:
\begin{equation*}
\jump{u}=0 \quad \text{on }\partial D, \quad 
\jump{\sigma \partial_n u}=0 \quad \text{on }\partial D,
\end{equation*}
where $\jump{\cdot}$ denotes the difference between the traces on $\partial D$ from the outside and the inside respectively, and $\partial_n$ denotes the normal derivative on $\partial D$.

\end{remark}

\begin{remark}[On the regularity of $\partial\Omega$]\label{rmk boundary}
Throughout this paper, we assume that $\partial\Omega$ is of class $C^{1,\gamma}$ for some $\gamma\in (0,1]$. Thus, by the classical Schauder boundary estimates (see \cite{kenig1994harmonic}), the solution $u$ of the Dirichlet problem given by the first two lines of \eqref{odp} is of class $C^{1,\gamma}$ in a neighborhood of the boundary $\partial\Omega$. In particular, if such $u$ also satisfies the Neumann boundary condition in \eqref{odp} (that is, if $(D,\Omega)$ solves the overdetermined problem \eqref{odp}), then it does so in the classical (point-wise) sense. As a result, the local regularity result \cite[Theorem 2]{kinderlehrer1977regularity} implies that $\partial\Omega$ is an analytic surface.
\end{remark}

Overdetermined problem \eqref{odp} has many physical interpretations. 
For instance, when Serrin originally introduced his overdetermined problem in \cite{serrin1971}, corresponding to the case $\sigma\equiv 1$ in \eqref{odp}, he gave the following fluid dynamical interpretation:

``\emph{the tangential stress on the pipe wall is the same at all points of the wall if and only if the pipe has a circular cross section.}"

\noindent Another interpretation  given in \cite{serrin1971} 
pertains to the linear theory of elasticity (see \cite{sokolonikoffElasticity}):

``\emph{when a solid straight bar is subject to torsion, the magnitude of the resulting traction which occurs at the surface of the bar is independent
of position if and only if the bar has a circular cross section.}"

\noindent The latter interpretation readily generalizes to the case described in \eqref{odp}, where the bar is a ``composite", that is, made by two materials rigidly bonded together according to the distribution given by \eqref{sigma def}. 

We remark that there is a natural reason for requiring the traction to be constant on the outer surface of the bar. 
Indeed, as it will be explained in further detail in section \ref{sec pf I}, such an overdetermined condition arises as a necessary condition for the shape optimization problem of maximizing the torsional rigidity functional $T_D(\cdot)$ under a volume constraint. Here 
\begin{equation*}\label{torsion}
  T_D(\Omega):= \int_\Omega \sigma |\nabla u |^2= \int_\Omega u ,  
\end{equation*}
where $u$, the so-called ``\emph{Prandtl stress function}", is defined as the solution to the boundary value problem
\begin{equation*}
    -{\rm div}(\sigma\nabla u)=1\quad\text{in }\Omega,\qquad 
        u=0\quad\text{on }\partial\Omega,
\end{equation*}
that is, $u$ the unique function in $H_0^1(\Omega)$ satisfying \eqref{u weak form} for all $\varphi\in H_0^1(\Omega)$. We remark that such a simplified description of torsional rigidity via Prandtl stress function is physically accurate only if the bar is sufficiently long (compared to the dimension of its cross-section) and we refer the interested reader to \cite{dondl2024phase}, where the authors fruitfully use this approach to study the multi-material composites found in the morphology of plant
stems. 

Finally, we briefly mention some works where similar overdetermined problems have been considered in the context of two-phase thermal conductors \cite{sakaguchi2016rendiconti,sakaguchibessatsu,CavallinaMagnaniniSakaguchi2021constantflowproperty}. 
\\

In what follows, we will introduce the main results of this paper.
As shown in \cite{cavallina2019stability}, the radial configurations $(D_0,\Omega_0)$ given by concentric balls are \emph{local maximizers} for $T_D$ under a volume constraint if $\sigma_c>1$ and are \emph{saddle shapes} if $\sigma_c<1$. The following theorem concerns the non-existence of local minimizers.     
\begin{mainthm}\label{thm I}
Let $(D,\Omega)$ be a solution to the overdetermined problem \eqref{odp}. Then, $(D,\Omega)$ is not a local minimizer in the sense of Definition \ref{def critical shapes}.    
\end{mainthm}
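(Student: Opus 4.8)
The plan is to show that no solution $(D,\Omega)$ can be a local minimizer by exhibiting, for any admissible $(D,\Omega)$, a volume-preserving perturbation that strictly decreases the torsional rigidity $T_D$. Since a local minimizer of $T_D$ under the volume constraint must in particular be a critical shape (which, by the first-variation computation alluded to in Section~\ref{sec pf I}, is exactly a solution of \eqref{odp}), and since the second variation of $T_D$ is the relevant quadratic form, I expect the argument to hinge on showing that the Hessian of the (volume-constrained) torsional rigidity functional is \emph{not} positive semidefinite at any solution. The cleanest route is to produce an explicit direction of descent.

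First I would recall (or re-derive from the weak formulation \eqref{u weak form}) the shape-derivative formulas for $T_D$ with respect to perturbations of the outer boundary $\partial\Omega$ and, if needed, of the interface $\partial D$. For a normal perturbation of $\partial\Omega$ with speed field $V_n$ one gets the first variation $\delta T_D = -\int_{\partial\Omega} |\partial_n u|^2 V_n$ (up to a possible sign/constant), and at a solution $\partial_n u \equiv c$ is constant, so criticality under the volume constraint $\int_{\partial\Omega} V_n = 0$ is automatic — this is the "necessary condition" remark. Then I would compute the second variation at a solution. The key structural fact I expect to use is scaling: the map $t\mapsto T_{tD}(t\Omega)$ scales like $t^{N+2}$ while the volume scales like $t^N$, so dilations are an explicit family along which one can test convexity. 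More precisely, I would consider the one-parameter family obtained by dilating both $D$ and $\Omega$ about a common center and simultaneously correcting to keep $|\Omega|$ fixed (e.g. by composing with a suitable volume-restoring flow), and compute the second derivative of $T_D$ along this path.

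The main step — and the main obstacle — is to extract from this computation a genuine \emph{decrease}. The natural candidate perturbation is a non-radial one: since the concentric-ball configuration is only a saddle when $\sigma_c<1$, and one wants a statement valid for all $\sigma_c\neq 1$ and all solutions, the homogeneity/scaling argument alone won't suffice (dilation is killed by the volume constraint, and the leftover term may have either sign). I would instead argue by contradiction: suppose $(D,\Omega)$ is a local minimizer; then the constrained second variation is $\geq 0$ for all admissible $(V_n^{\partial\Omega}, V_n^{\partial D})$. Using the analyticity of $\partial\Omega$ (Remark~\ref{rmk boundary}) and the transmission conditions, I would plug in a carefully chosen test perturbation — for instance one supported near a point of $\partial\Omega$, or one aligned with a translation/affine field that exploits the overdetermined data $\partial_n u=c$ — to force a negative contribution, contradicting semidefiniteness. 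Translations are a promising concrete choice: translating the whole pair $(D,\Omega)$ rigidly leaves $T_D$ and $|\Omega|$ unchanged, giving a kernel direction of the Hessian, and then a second-order perturbation transverse to this kernel, combined with the strict overdeterminacy, should produce the descent direction.

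I expect the technical heart to be the correct bookkeeping of the second shape derivative of $T_D$ in the two-phase setting — in particular keeping track of the interface terms involving $\jump{\sigma\partial_n u}=0$ and the curvature of both $\partial\Omega$ and $\partial D$ — and then identifying a single explicit admissible perturbation whose second-variation value is manifestly negative. Once such a perturbation is found, the conclusion that $(D,\Omega)$ violates Definition~\ref{def critical shapes}'s local-minimality condition is immediate. If a fully explicit descent direction proves elusive, the fallback is the indirect route: show that the Hessian, restricted to the codimension-one constraint space and modulo the rigid-motion kernel, has a negative eigenvalue by a spectral/variational argument using the sign of $\sigma_c-1$ together with a compactness argument on the relevant bilinear form.
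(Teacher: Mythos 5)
Your proposal correctly identifies the overall framework (show the constrained second variation of $T_D$ fails to be positive semidefinite at any solution), but it never lands on the idea that actually makes the proof work, and the concrete directions you do propose either fail or are acknowledged by you to fail. Dilations are excluded by the volume constraint, translations are kernel directions (so they prove nothing about minimality), and a perturbation ``supported near a point of $\partial\Omega$'' or ``aligned with an affine field'' is not shown to produce a negative second variation — this is precisely the step you leave open. Your fallback sentence gestures at a spectral argument but misattributes the mechanism to ``the sign of $\sigma_c-1$''; in fact the sign of $\sigma_c-1$ plays no role whatsoever in the paper's proof, which works uniformly for all $\sigma_c\ne 1$.

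The missing idea is the following. The second variation at a solution has the form
\[
\restr{\tfrac{d^2}{dt^2}}{t=0}T_D(\Omega_t)=2c\int_{\partial\Omega}\partial_n u'\,(h\cdot n)+2c\int_{\partial\Omega}\partial^2_{nn}u\,(h\cdot n)^2,
\]
where $u'$ solves the two-phase Dirichlet problem with boundary datum $-c\,(h\cdot n)$. The first term is governed by the (two-phase) Dirichlet-to-Neumann quadratic form, i.e.\ by the \emph{inverse} of the compact, self-adjoint, positive Neumann-to-Dirichlet operator $\Lambda$ on $L^2_*(\partial\Omega)$; the second term is merely bounded by a constant times $\|h\cdot n\|^2_{L^2(\partial\Omega)}$. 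Choosing $h\cdot n=\xi_k$, an $L^2$-normalized eigenfunction of $\Lambda$ with eigenvalue $\lambda_k\to 0$ (these have zero average, hence are admissible first-order volume-preserving directions, and are smooth enough to be completed to elements of $\mathcal{A}$), one gets $\partial_n u'=-\frac{c}{\lambda_k}\xi_k$ and hence a second variation bounded above by $-\frac{2c^2}{\lambda_k}+2c\min_{\partial\Omega}\partial^2_{nn}u$, which tends to $-\infty$ as $k\to\infty$. It is this unboundedness below along high-frequency normal perturbations — not any finite-dimensional test direction, and not the sign of $\sigma_c-1$ — that rules out local minimizers. Without identifying this mechanism your plan does not close.
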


The remaining theorems concern some qualitative properties of $\partial\Omega$. In particular, the following theorem formalizes the intuitive notion that ``$\Omega$ follows the shape of $D$ without meandering too much". 

\begin{mainthm}\label{thm II}
 Let $(D,\Omega)$ be a solution to the overdetermined problem \eqref{odp}. Then $\Omega$ ``\emph{has no tentacles}", in the sense of Definition \ref{def tentacles}.  
\end{mainthm}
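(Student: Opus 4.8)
The plan is to run the method of moving planes in the ``shell'' $\Omega\setminus\overline D$, where $u$ solves the single-phase equation $-\Delta u=1$ while the overdetermined pair $u=0$, $\partial_n u=c$ still holds on $\partial\Omega$. First I record the ingredients: $u>0$ in $\Omega$ by the strong maximum principle (since $-\mathrm{div}(\sigma\nabla u)=1>0$ and $u=0$ on $\partial\Omega$); $c<0$, because integrating the equation and using $\sigma\equiv1$ near $\partial\Omega$ gives $c=-|\Omega|/|\partial\Omega|$; and, by Remark~\ref{rmk boundary}, $u\in C^{1,\gamma}$ up to $\partial\Omega$ and $\partial\Omega$ is analytic, so the Neumann datum holds pointwise and Hopf's lemma and Serrin's corner lemma apply.

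Fix a direction $e\in S^{N-1}$ and set $M=\max_{\overline\Omega}x\cdot e$. For $\lambda<M$ write $T_\lambda=\{x\cdot e=\lambda\}$, $\Omega_\lambda^+=\Omega\cap\{x\cdot e>\lambda\}$, let $x^\lambda$ be the reflection of $x$ across $T_\lambda$, $\Sigma_\lambda=\{x^\lambda:x\in\Omega_\lambda^+\}$ the reflected cap, and on $\Sigma_\lambda$ put $w_\lambda(x)=u(x^\lambda)-u(x)$. Since the right-hand side $1$ is constant, $-\Delta w_\lambda=0$ in $\Sigma_\lambda$ as soon as $\overline D$ is disjoint from both $\overline{\Omega_\lambda^+}$ and $\overline{\Sigma_\lambda}$; moreover $w_\lambda=0$ on $T_\lambda\cap\partial\Sigma_\lambda$ and $w_\lambda=-u\le0$ on the reflected portion of $\partial\Omega$ inside $\partial\Sigma_\lambda$. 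Hence, as long as $\Sigma_\lambda\subseteq\Omega$ and the core is not reached, the maximum principle gives $w_\lambda\le0$ in $\Sigma_\lambda$, and its strong form gives either $w_\lambda\equiv0$ or $w_\lambda<0$ with $\partial_e u<0$ on $T_\lambda\cap\Omega$. This holds for $\lambda$ slightly below $M$: near the point of $\partial\Omega$ maximizing $x\cdot e$ the cap is a thin lens whose reflection lies inside $\Omega$ and far from $\overline D$, and $\partial_e u=\partial_n u=c<0$ there. Decreasing $\lambda$ by the usual continuation argument, let $\lambda^*=\lambda^*(e)$ be the first value at which the procedure breaks down. If it breaks because $\overline{\Sigma_{\lambda^*}}$ (equivalently $\overline{\Omega_{\lambda^*}^+}$) first meets $\overline D$, then the moving plane has been stopped by the core, which is precisely the ``no tentacle'' statement of Definition~\ref{def tentacles}; so the task is to exclude the alternative, in which $\lambda^*$ is forced by the geometry of $\partial\Omega$ while $\overline{\Sigma_{\lambda^*}}\cup\overline{\Omega_{\lambda^*}^+}$ is still disjoint from $\overline D$.

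In that alternative, the standard dichotomy gives, at $\lambda=\lambda^*$, either (a) internal tangency of $\overline{\Sigma_{\lambda^*}}$ to $\partial\Omega$ at some $p\notin T_{\lambda^*}$, or (b) a point $q\in T_{\lambda^*}\cap\overline\Omega$ with $\partial_e u=0$ (the subcase $q\in\partial\Omega$ amounting to $T_{\lambda^*}$ meeting $\partial\Omega$ orthogonally). In case (a), $w_{\lambda^*}$ is harmonic and $\le0$ in $\Sigma_{\lambda^*}$ and vanishes at $p$ (as $u(p)=u(p^{\lambda^*})=0$); if $w_{\lambda^*}\not\equiv0$, Hopf's lemma gives $\partial_\nu w_{\lambda^*}(p)>0$ for the outer normal $\nu$ of $\Sigma_{\lambda^*}$ at $p$, while internal tangency identifies $\nu$ with $n(p)$ and with the reflection of $n(p^{\lambda^*})$, whence $\partial_\nu w_{\lambda^*}(p)=\partial_n u(p^{\lambda^*})-\partial_n u(p)=c-c=0$, a contradiction; so $w_{\lambda^*}\equiv0$. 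In case (b) the same conclusion follows from Hopf's lemma at $q$ when $q\in\Omega$, and from Serrin's corner lemma together with the constancy of $\partial_n u$ when $q\in\partial\Omega$. Either way $u$, hence $\partial\Omega$, is locally symmetric under the reflection $\tau$ across $T_{\lambda^*}$ near $p$ (resp.\ $q$).

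It remains to promote this local symmetry to a contradiction, and this is the crux of the argument. Since $p$ (resp.\ $q$) lies on $\partial\Omega$ away from $\overline D$, $u$ is real-analytic there, so unique continuation propagates the identity $u=u\circ\tau$ across the connected component of $\Omega\setminus(\overline D\cup\tau(\overline D))$ containing a one-sided neighborhood of that point; together with the analyticity of $\partial\Omega$ from Remark~\ref{rmk boundary} this forces $\partial\Omega$, hence $\Omega$, to be globally symmetric about $T_{\lambda^*}$. But such a symmetry of the pair $(\Omega,u)$ is incompatible with the core: at a point of $D$ whose $\tau$-reflection lies in the shell one would have $-\sigma_c\Delta u=1$ and, by symmetry, $-\Delta u=1$ at the same point, impossible since $\sigma_c\ne1$; tracking this forces a connected component of $\Omega$ disjoint from $\overline D$ which, through $u=0$ and $\partial_n u=c$, solves the classical single-phase Serrin problem and is therefore a ball — impossible, because $\Omega$ is a connected domain containing the nonempty open set $D$. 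Hence the geometric alternative never occurs: in every direction the moving plane is halted by the core, establishing Theorem~\ref{thm II}. I expect the main difficulty to lie in this last step — organizing the unique-continuation propagation around $\overline D$ and its reflection, and disposing of the degenerate geometric configurations at $\lambda^*$, where Serrin's corner lemma and the analyticity of $\partial\Omega$ carry the load.
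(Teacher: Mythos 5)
Your moving-plane setup, the treatment of internal tangency via Hopf's lemma, and the appeal to Serrin's corner lemma at an orthogonal contact all coincide with the paper's argument (up to your opposite sign convention for $w_\lambda$). The genuine gap is in your final step, the one you yourself flag as the crux. After concluding $w_{\lambda^*}\equiv 0$ you try to reach a contradiction by promoting the local symmetry to a global one via unique continuation, and this does not hold together as written: $u\circ\tau$ is defined only on $\tau(\Omega)$, so the identity $u=u\circ\tau$ can at best be propagated on a component of $(\Omega\cap\tau(\Omega))\setminus(\overline D\cup\tau(\overline D))$, and the jump to ``$\partial\Omega$, hence $\Omega$, is globally symmetric about $T_{\lambda^*}$'' is unjustified; the contradiction you then extract at a point $x\in D$ whose reflection lies in the shell presupposes that $u=u\circ\tau$ holds in a neighbourhood of $x$ \emph{inside} $D$, which analytic continuation cannot deliver because it cannot cross $\overline D$, where $u$ satisfies a different equation; and the fallback to ``a connected component of $\Omega$ disjoint from $\overline D$ solving the one-phase Serrin problem'' is incoherent, since $\Omega$ is a connected domain containing $D$.

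The paper removes the need for all of this by showing, \emph{before} the case analysis, that $w_{\lambda^*}\not\equiv 0$. Since case (i) has not occurred, the reflected cap is confined to the shell $\Omega\setminus\overline D$, while the nonempty set $\overline D$ lies in $\{x\cdot e<\lambda^*\}$; hence the reflected cap cannot exhaust $\Omega\cap\{x\cdot e<\lambda^*\}$, and its boundary must contain a point $Q$ interior to $\Omega$ and off $T_{\lambda^*}$, where (in your convention) $w_{\lambda^*}(Q)=-u(Q)<0$. The strong maximum principle then gives $w_{\lambda^*}<0$ throughout $\Sigma_{\lambda^*}$, so in alternative (a) Hopf's lemma alone yields the contradiction $0=c-c=\partial_\nu w_{\lambda^*}(p)>0$, and in alternative (b) the conclusion $w_{\lambda^*}\equiv 0$ from the corner lemma is itself the contradiction. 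You should replace your last paragraph with this observation. Separately, the corner case with $q\in\partial\Omega$ does require the explicit verification, via the local graph parametrization of $\partial\Omega$ and differentiation of both $u=0$ and $\partial_n u=c$ along the boundary, that $w_{\lambda^*}$ has a second-order zero at $q$ before Serrin's lemma applies; you only gesture at this, and it is the computational core of that case in the paper.
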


The following theorem shows that $\partial \Omega$ generally exhibits a non-trivial shape.

\begin{mainthm}\label{thm III}
 Let $(D,\Omega)$ be a solution to the overdetermined problem \eqref{odp}. Then $\partial\Omega$ does not contain any flat part. 
 Moreover, if $\Omega\setminus\overline{D}$ is connected and $D$ is an open set of class $C^2$ with finitely many connected components, then $\partial\Omega$ contains a portion of a sphere if and only if $(D,\Omega)$ are concentric balls.   
\end{mainthm}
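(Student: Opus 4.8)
Both assertions are proved by the same mechanism. Since $\overline D\subset\subset\Omega$ we have $\sigma\equiv 1$, and hence $-\Delta u=1$, in a one-sided neighbourhood of $\partial\Omega$; moreover $\partial\Omega$ is analytic by Remark~\ref{rmk boundary}, so $u$ is real-analytic up to $\partial\Omega$. Given a flat (resp.\ spherical) piece $\Sigma\subset\partial\Omega$, I would let $X$ run over the infinitesimal generators of the translations parallel to $\Sigma$ (resp.\ of the rotations about the centre of the sphere containing $\Sigma$). Each such $X$ commutes with $\Delta$ and annihilates constants, so $Xu$ is harmonic near $\Sigma$; and because $X$ is tangent to $\Sigma$ while $u=0$ and $\partial_n u=c$ are both constant along $\Sigma$, the Cauchy data of $Xu$ on $\Sigma$ vanish (in the spherical case one must first write $u$ minus its radial comparison function as $(|x|-R)\psi$ with $\psi$ analytic and constant on $\Sigma$). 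By Holmgren's uniqueness theorem — equivalently, by Schwarz reflection across $\Sigma$ followed by the identity theorem — one gets $Xu\equiv 0$ in a neighbourhood of $\Sigma$, and then by real-analyticity $Xu\equiv0$ on the whole connected component of $\Omega\setminus\overline D$ abutting $\Sigma$. Thus $u$ is, near $\partial\Omega$, invariant under the corresponding symmetries; the rest of each proof extracts a contradiction (resp.\ the claimed rigidity) from this, using the positivity $u>0$ in $\Omega$ (strong maximum principle), the sign $c<0$ (Hopf's lemma), and elementary topology of hypersurfaces.

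\textbf{No flat part.} After a rigid motion, assume $\Sigma\subset\{x_N=0\}$ with $\Omega$ locally in $\{x_N<0\}$. The step above gives $\partial_{x_j}u\equiv0$ ($j<N$) on the component $G$ of $\Omega\setminus\overline D$ touching $\Sigma$, hence $u=c\,x_N-x_N^2/2$ on $G$. Since $u>0$ on $G$ and $c<0$, $G$ lies in the slab $\{2c<x_N<0\}$; and since $u$ is continuous on $\overline\Omega$, on $\partial G\cap\partial\Omega$ we have $c\,x_N-x_N^2/2=0$, i.e.\ $x_N\in\{0,2c\}$. As $\Omega$ and $G$ coincide near $\partial\Omega$, the set $\partial G\cap\partial\Omega$ is open and closed in $\partial\Omega$, so it contains the whole connected component $\Gamma$ of $\partial\Omega$ through a point of $\Sigma$; by connectedness $\Gamma\subset\{x_N=0\}$. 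But $\Gamma$ is then a compact, connected, boundaryless $(N-1)$-dimensional submanifold of the hyperplane $\{x_N=0\}\cong\mathbb R^{N-1}$, hence (invariance of domain) open and closed in $\mathbb R^{N-1}$, forcing $\Gamma=\mathbb R^{N-1}$ — impossible since $\Gamma$ is compact.

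\textbf{Portion of a sphere.} Assume $\Sigma$ lies on a sphere centred, after translation, at the origin, and use the angular fields $X=T_{ij}:=x_i\partial_j-x_j\partial_i$ (for which $T_{ij}|x|=0$). The mechanism above, now invoking the hypothesis that $\Omega\setminus\overline D$ is connected, yields $T_{ij}u\equiv0$ on all of $\Omega\setminus\overline D$, so $u=U_0(|x|)$ there, where $U_0$ is the radial solution of $-\Delta U_0=1$ determined by $u=0$, $\partial_n u=c$ on $\Sigma$. By continuity of $u$ on $\overline\Omega$ it follows that $\partial\Omega\subset\{U_0(|x|)=0\}$, which is a finite union of concentric spheres (the zeros of the non-constant analytic profile $U_0$ being isolated); therefore every connected component of $\partial\Omega$ is a full concentric sphere, and consequently $\Omega$ is either a concentric ball or a concentric annular region.

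\textbf{Conclusion and main difficulty.} It remains to show that every connected component $D_k$ of $D$ (a compact $C^2$ hypersurface, by hypothesis) is a concentric ball: granting this, disjointness forces $D$ to consist of a single ball, whence (using $\overline D\subset\subset\Omega$) $\Omega$ must be the surrounding concentric ball, and we are done; the converse implication is trivial. The transmission conditions $\jump{u}=0$, $\jump{\sigma\partial_n u}=0$ on $\partial D_k$ together with $u=U_0(|x|)$ in the shell determine the full Cauchy data of $u|_{D_k}$; writing $u-U_0(|x|)=(\sigma_c^{-1}-1)\,\tau_k$ on $D_k$, where $\tau_k$ is the torsion function of $D_k$, one computes $\partial_n\tau_k=\partial_n U_0(|x|)$ on $\partial D_k$, so that $\tau_k-U_0(|x|)$ is harmonic in $D_k$ with vanishing Neumann data, hence constant. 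Therefore $U_0(|x|)$ is constant on $\partial D_k$; since $U_0$ is a non-constant real-analytic function of $r$, the connected set $\partial D_k$ must lie on a single concentric sphere, and being a compact boundaryless hypersurface it is that whole sphere, so $D_k$ is a concentric ball. (If the origin belongs to $\overline{D_k}$ one first observes that boundedness of $u$, resp.\ the divergence theorem applied to $\tau_k-U_0(|x|)$ on $D_k$, forces the fundamental-solution coefficient in $U_0$ to vanish, so that $U_0$ is in fact smooth there.) I expect the crux to be precisely this last step — transferring the radial rigidity of $u$ from the shell, where $u$ is completely known, across the interface into the core, where $u$ solves a different equation — together with the bookkeeping of the possible singularity of the radial profile at the centre and of the degenerate ``annular'' configurations in the topological endgame.
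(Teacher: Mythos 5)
Your main reduction is the same as the paper's: analyticity of $\partial\Omega$ plus uniqueness for the Cauchy problem (you via tangential/rotational vector fields and Holmgren, the paper via Cauchy--Kovalevskaya and the identity theorem) forces $u$ to depend only on $x_N$, resp.\ on $|x|$, throughout the relevant component of the shell, and the flat case then dies for topological reasons. (The paper's flat-part argument is even shorter: the analytic surface $\partial\Omega$ containing an open flat piece must contain the whole hyperplane, contradicting boundedness; it never needs the PDE.) Where you genuinely diverge is the endgame of the spherical case: the paper, once $\Omega$ is known to be a ball, simply invokes Sakaguchi's two-phase symmetry theorem, whereas you re-derive the core rigidity by hand via the decomposition $u-U_0=(\sigma_c^{-1}-1)\tau_k$ on each component $D_k$ and the resulting zero-Neumann harmonic function $\tau_k-U_0$. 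That computation is correct and is an attractive self-contained substitute for the citation.

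There are, however, two genuine gaps. First, you never directly exclude the annular $\Omega$. The paper does this by an explicit ODE computation: the radial profile with $U(R_1)=U(R_2)=0$ satisfies $-U'(R_1)=U'(R_2)$ for no $0<R_1<R_2$ (in dimension $2$ this reduces to $\sinh s=s$, $s=\log(R_2/R_1)$). Your indirect exclusion ("$D$ is a single concentric ball, which contains the centre, hence $\Omega$ cannot be an annulus") rests entirely on the assertion that each $D_k$ is a concentric \emph{ball}, and that assertion in turn uses the unproved claim that $\partial D_k$ is connected. A connected component of a $C^2$ open set can perfectly well be an annular shell $\{r_1<|x|<r_2\}$; your argument only shows that $U_0$ is constant on all of $\partial D_k$, hence that each component of $\partial D_k$ is a full concentric sphere, not that there is only one such component. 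To close this you must use the connectedness of $\Omega\setminus\overline D$ (and the finiteness and one-sidedness of the $C^2$ components of $D$) to rule out shells; this is doable but is missing, and it is precisely what your annulus exclusion hangs on. Second, the parenthetical claim that "boundedness of $u$" kills the singular coefficient of $U_0$ when $0\in\overline{D_k}$ is wrong: $u$ coincides with $U_0(|x|)$ only in the shell, which does not reach the origin in that case, so $u$'s boundedness says nothing about $U_0$ near $r=0$. Your fallback via the divergence theorem applied to $\tau_k-U_0$ is the correct argument and should be the primary one.
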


Finally, the following theorem shows that the shape of the free boundary $\partial\Omega$ depends on the choice of the constant $\sigma_c$ unless $(D,\Omega)$ are concentric balls. 

\begin{mainthm}\label{thm IV}
 Let $\Omega\setminus\overline{D}$ be connected and assume that the boundary of the open set $D$ has zero Lebesgue measure and satisfies $\partial D = \partial \overline{D}$. Under these assumptions, $(D,\Omega)$ is simultaneously a solution to the overdetermined problem \eqref{odp} for two different values of $\sigma_c$ if and only if $(D,\Omega)$ are concentric balls. 
\end{mainthm}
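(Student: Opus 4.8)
The plan is to argue as follows. Suppose that $(D,\Omega)$ solves the overdetermined problem \eqref{odp} simultaneously for two distinct values $\sigma_c^{(1)}\ne\sigma_c^{(2)}$ of the conductivity in the core. Let $u_1$ and $u_2$ be the corresponding solutions, with the corresponding piecewise-constant coefficients $\sigma^{(1)}$ and $\sigma^{(2)}$, and with (a priori distinct) Neumann constants $c_1$ and $c_2$. Since the boundary $\partial\Omega$, the forcing term, and the Dirichlet datum are the same for both problems, the functions $u_1$ and $u_2$ agree on $\Omega\setminus\overline D$ up to the resolution of the exterior problem: more precisely, consider $w:=u_1-u_2$. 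On $\Omega\setminus\overline D$ both $u_j$ satisfy $-\Delta u_j=1$, so $w$ is harmonic there; and $w=0$ on $\partial\Omega$. The key point is to exploit the two overdetermined (Neumann) conditions on $\partial\Omega$ together with the transmission conditions on $\partial D$ to force $w$ to be, in effect, a function of a single radial variable, after which the rigidity of overdetermined problems (via the classical Serrin argument, or the already-proven Theorem III) pins down the configuration to concentric balls.

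First I would normalize. Replacing $u_1$ by $u_1-$ a suitable reference solution or, more cleanly, working directly with $w=u_1-u_2$: on the shell $\Omega\setminus\overline D$, $w$ is harmonic, $w|_{\partial\Omega}=0$, and $\partial_n w|_{\partial\Omega}=c_1-c_2=:c$ is constant. If $c=0$, then $w$ is harmonic in the shell with both zero Dirichlet and zero Neumann data on $\partial\Omega$; by Holmgren/unique continuation (using that $\partial\Omega$ is analytic by Remark \ref{rmk boundary}, and that $\Omega\setminus\overline D$ is connected) $w\equiv 0$ on the shell. Then $u_1=u_2$ on the shell, and comparing the normal derivatives from the shell side across $\partial D$ with the transmission condition $\jump{\sigma^{(j)}\partial_n u_j}=0$ gives $\sigma_c^{(1)}\partial_n u_1|_{\mathrm{int}}=\partial_n u_1|_{\mathrm{ext}}=\partial_n u_2|_{\mathrm{ext}}=\sigma_c^{(2)}\partial_n u_2|_{\mathrm{int}}$ on $\partial D$, while also $u_1=u_2$ on $\partial D$; one then shows $\partial_n u_j|_{\mathrm{int}}$ cannot vanish identically on any relatively open piece of $\partial D$ (from $-\sigma_c^{(j)}\Delta u_j=1$ and Hopf-type arguments on the components of $D$), forcing $\sigma_c^{(1)}=\sigma_c^{(2)}$, a contradiction — so this case does not actually occur unless the $u_j$ already coincide, handled below. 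If $c\ne 0$, then after rescaling we may assume $w$ is harmonic in the shell, vanishes on $\partial\Omega$, and has constant (nonzero) normal derivative there: this is precisely the exterior data of a Serrin-type overdetermined problem on the shell, except that the inner boundary data of $w$ on $\partial D$ are not yet prescribed. Here is where I would bring in the transmission conditions to close the system.

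The heart of the argument is therefore the following. On $\partial D$ we have $\jump{u_j}=0$ and $\jump{\sigma^{(j)}\partial_n u_j}=0$ for $j=1,2$; writing $\partial_n^- $ for the trace from inside $D$ and $\partial_n^+$ from the shell, these read $u_j^+=u_j^-$ and $\partial_n^+ u_j=\sigma_c^{(j)}\partial_n^- u_j$ on $\partial D$. Inside $D$ (each component), $u_j$ solves $-\sigma_c^{(j)}\Delta u_j = 1$; inside the shell, $-\Delta u_j=1$ with $u_j=0$ and $\partial_n u_j=c_j$ on $\partial\Omega$. I would like to deduce from the existence of \emph{two} such solution pairs that $u_1$ and $u_2$ must be related by an affine rescaling, $u_2=\alpha u_1+\beta$, with $\alpha,\beta$ depending on $\sigma_c^{(1)},\sigma_c^{(2)}$; but the PDEs are inhomogeneous so a pure scaling cannot work globally, and this forces the geometry. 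Concretely, consider the overdetermined problem satisfied by $u_1$ on the shell alone: $-\Delta u_1=1$, $u_1=0$ and $\partial_n u_1=c_1$ on $\partial\Omega$, $u_1=g$ and $\partial_n u_1=h$ on $\partial D$ for the (unknown) traces $g,h$. The same holds for $u_2$ with $c_2$, $g$ (same, since $u_1=u_2$ on $\partial D$? — no: we only know $u_j^+=u_j^-$, not $u_1=u_2$), so one genuinely has two Cauchy problems for the Laplacian in the shell sharing the outer data pattern but with different constants $c_1\ne c_2$. Subtracting: $w=u_1-u_2$ is harmonic in the shell, $w=0$, $\partial_n w = c_1-c_2$ on $\partial\Omega$. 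Now the crucial extra input: differentiate the transmission relations. From $\partial_n^+ u_j = \sigma_c^{(j)}\partial_n^- u_j$ and $-\sigma_c^{(j)}\Delta u_j=1$ in $D$, an integration by parts over each component $D_k$ of $D$ gives $\int_{\partial D_k}\partial_n^- u_j = -|D_k|/\sigma_c^{(j)}$, hence $\int_{\partial D_k}\partial_n^+ u_j = -|D_k|$, \emph{independent of $j$}. So $\int_{\partial D_k}\partial_n^+ w = 0$ for every component, consistent with $w$ harmonic but not yet decisive. To get more I would use the full method of moving planes as developed in the proof of Theorem III (or directly Serrin's reflection argument applied to $w$ on the shell with its constant outer Neumann datum), combined with a comparison between $u_1$ and $u_2$: the idea is that $w$ restricted to the shell, being harmonic with constant boundary data on $\partial\Omega$, is "almost" the torsion-type function whose level sets the moving-plane method controls, and one shows $\partial\Omega$ (and then $\partial D$) must be a sphere. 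Finally, once $\partial\Omega$ is a sphere (so, by Theorem III or directly, $(D,\Omega)$ are concentric balls when $\Omega\setminus\overline D$ is connected — here the hypotheses $\partial D=\partial\overline D$ and $|\partial D|=0$ are exactly what lets one pass from "$\partial\Omega$ contains a portion of a sphere" and connectedness to the concentric-balls conclusion via Theorem III), the converse direction (concentric balls solve \eqref{odp} for every $\sigma_c$, with $c$ adjusted) is an explicit computation with radial functions.

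The main obstacle I anticipate is precisely the step that turns "two solution pairs exist" into a genuine geometric constraint: unlike the one-phase Serrin problem, here the outer Neumann condition is automatically the \emph{same} structural condition for both $\sigma_c$ values, so naive subtraction only yields a harmonic function with constant — possibly nonzero — Neumann datum on $\partial\Omega$ and \emph{unknown} data on $\partial D$, which is not by itself overdetermined. The real leverage must come from coupling this with the two transmission conditions, which constrain how the interior solutions (governed by the two different constants $\sigma_c^{(j)}$) match across $\partial D$; reconciling these is what should force $\partial D$, and hence $\partial\Omega$, to be spherical. I expect the cleanest route is to show $w\equiv 0$ on the shell (so $c_1=c_2$ and $u_1\equiv u_2$ there), then derive a pointwise identity on $\partial D$ between $\partial_n^- u_1$ and $\partial_n^- u_2$ with a \emph{constant} ratio $\sigma_c^{(2)}/\sigma_c^{(1)}$, and feed this — together with $u_1=u_2$ on $\partial D$ and the interior equations — into a unique-continuation / Serrin-type argument on each component of $D$, concluding that $D$ (and then $\Omega$) is a union of balls and, by connectedness of the shell, a single concentric pair.
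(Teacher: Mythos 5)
Your opening step is right and matches the paper: since both solutions carry the same Cauchy data on the analytic surface $\partial\Omega$ (and note that $c_1=c_2$ automatically, because integrating the equation over $\Omega$ and using $\sigma\equiv 1$ near $\partial\Omega$ gives $c_j=-|\Omega|/|\partial\Omega|$ for both $j$, so your ``$c\ne 0$'' branch never occurs), unique continuation for $-\Delta u=1$ in the connected shell forces $u_1\equiv u_2$ on $\Omega\setminus\overline D$. From there, however, the proposal has two problems. First, an outright error: in this case you claim the transmission conditions force $\sigma_c^{(1)}=\sigma_c^{(2)}$, ``a contradiction''. They do not: from $\partial_n^+u_1=\partial_n^+u_2$ on $\partial D$ you only get $\sigma_c^{(1)}\partial_n^-u_1=\sigma_c^{(2)}\partial_n^-u_2$, where the two interior normal derivatives belong to \emph{different} functions, so nothing forces the coefficients to coincide --- indeed concentric balls realize exactly this situation, so a genuine contradiction here would disprove the ``if'' direction of the theorem. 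Second, and more importantly, your endgame (``feed this into a unique-continuation / Serrin-type argument on each component of $D$'', or moving planes applied to $u_1-u_2$) is not a proof: there is no overdetermined condition available on $\partial D$, and nothing you have written constrains the geometry of either $\partial D$ or $\partial\Omega$.

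The missing idea is an auxiliary function that converts the two-phase data into a genuine \emph{one-phase} Serrin problem on all of $\Omega$. Writing $\alpha=\sigma_c^{(1)}$, $\beta=\sigma_c^{(2)}$, the paper subtracts the two weak formulations and tests with $v:=\alpha u_1-\beta u_2$: since $u_1=u_2$ on the shell, this yields $\int_D|\nabla v|^2=0$, so $v$ is locally constant in $D$ (an energy argument that needs no regularity of $\partial D$, which is precisely why the theorem only assumes $|\partial D|=0$ and $\partial D=\partial\overline D$). Hence $\nabla(\alpha u_1)=\nabla(\beta u_2)$ in $D$, and one can glue $u_1$ on the shell with $\alpha u_1$ in $D$ (corrected by a locally constant function so that the traces match) into a single $w\in H_0^1(\Omega)$ satisfying $-\Delta w=1$ in all of $\Omega$ with $\partial_n w=c$ on $\partial\Omega$. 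Classical Serrin then gives that $\Omega$ is a ball and $u_1=(R^2-|x|^2)/(2N)$ on the shell; combining this with the local constancy of $v$ and the trace identity $v=(\alpha-\beta)u_1$ on $\partial D$ shows that the boundary of each component of $D$ lies on a sphere concentric with $\partial\Omega$, and connectedness of $\Omega\setminus\overline D$ leaves a single concentric ball. Your observation that $\sigma_c^{(1)}\partial_n^- u_1=\sigma_c^{(2)}\partial_n^- u_2$ on $\partial D$ is in fact the Neumann-trace shadow of the key fact that $\alpha u_1-\beta u_2$ is locally constant in $D$, but without the gluing construction above you never reach a configuration to which Serrin's theorem (or any other rigidity result) applies.
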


This paper is organized as follows.
In section \ref{sec pf I}, we give the precise definition of \emph{local maximizers}, \emph{local minimizers}, and \emph{saddle shapes} and show Theorem \ref{thm I} by making use of the asymptotic behavior of the eigenvalues of a two-phase Neumann-to-Dirichlet operator. In section \ref{sec pf II}, we give the precise definition of ``\emph{having a tentacle}" and show Theorem \ref{thm II} by adapting the classical proof of Serrin to the two-phase setting. In section \ref{sec pf III}, we show Theorem \ref{thm III} by employing the analyticity of the boundary $\partial\Omega$ and a symmetry result for two-phase overdetermined elliptic problems by Sakaguchi \cite{sakaguchibessatsu}. Finally, section \ref{sec pf IV} is devoted to the proof of Theorem \ref{thm IV}, where the desired symmetry is obtained by applying Serrin's symmetry result \cite{serrin1971} to a suitably defined auxiliary function. 

\section{Proof of Theorem \ref{thm I}}\label{sec pf I}

Let $(D,\Omega)$ be a solution to \eqref{odp}. As mentioned in the introduction, $\Omega$ is a critical shape for the torsional rigidity functional $T_D$ under a volume constraint. In what follows, we will give the precise definition of the three types of critical shapes: ``\emph{local maximizer}", ``\emph{local minimizer}", and ``\emph{saddle shape}".

For a given $\Omega$, let $\mathcal{A}$ denote the space of admissible perturbations of $\Omega$ that fix the volume of $\Omega$:
\begin{equation}\label{cA}
    \mathcal{A}:=\setbld{\Phi\in C^2\left([0,1], C^2(\mathbb{R}^N,\mathbb{R}^N)\right)}{\Phi(0)={\rm Id}, \quad |\Omega_t|=|\Omega| \quad 
    \forall
    t\in [0,1]},
\end{equation}
where $\Omega_t:=\Phi(t)(\Omega)=\setbld{\Phi(t)(x)}{x\in \Omega}$. 
For a given $\Phi\in \mathcal{A}$, we will use the following notation concerning its expansion:
\begin{equation}\label{expansion for Phi}
    \Phi(t)={\rm{Id}}+t h + o(t) \quad \text{as }t\to 0^+, 
\end{equation}
for some function $h\in C^2(\mathbb{R}^N,\mathbb{R}^N)$, where the expansion holds in the $C^2$ norm.
Notice that, since $\restr{\frac{d}{dt}}{t=0}|\Omega_t|=\int_
{\partial\Omega}h\cdot n$, the fact that $|\Omega_t|=|\Omega|$ for all $t$ leads to the following first-order volume preserving condition:
\begin{equation}\label{first order volume preserving condition}
   \int_{\partial\Omega}h\cdot n =0\quad \text{for all }\Phi\in \mathcal{A}. 
\end{equation}
\begin{remark}\label{extension remark}
Any function $h\in C^2(\mathbb{R}^N,\mathbb{R}^N)$ satisfying \eqref{first order volume preserving condition} can be ``completed" to an element $\Phi\in \mathcal{A}$ in the sense of \eqref{expansion for Phi} (see \cite[Remark 2.2]{cavallina2017locally} for an explicit construction).     
\end{remark}

As shown in \cite[Theorem 3.3]{cavallina2019stability}, for any $\Phi\in \mathcal{A}$, we have
\begin{equation}\label{T'}
    \restr{\frac{d}{dt}}{t=0} T_D(\Omega_t) = \int_{\partial\Omega} (\partial_n u)^2 \ (h\cdot n).
\end{equation}
As a result, $\restr{\frac{d}{dt}}{t=0} T_D(\Omega_t)=0$ holds true for all $\Phi\in \mathcal{A}$ if and only if the function $(\partial_n u)^2$ belongs to the orthogonal complement in $L^2(\partial\Omega)$ of the subspace of functions with zero average. In other words, $\Omega$ is a critical shape for $T_D$ under a volume constraint if and only if $(\partial_n u)^2$ is constant on $\partial\Omega$, which in turn holds if and only if $u$ is a solution to the overdetermined problem \eqref{odp} (as a consequence of Hopf's boundary lemma).  

Analogously, if $(D,\Omega)$ is a solution to \eqref{odp}, by arguing as in \cite[Theorem 4.2]{cavallina2019stability}, we get the following expression of the second order derivative:
\begin{equation}\label{T''}
\restr{\frac{d^2}{dt^2}}{t=0} T_D(\Omega_t) = 2c\int_{\partial\Omega} \partial_n u'\ (h\cdot n) + 2c\int_{\partial \Omega}\partial^2_{nn}u\ (h\cdot n)^2,  
\end{equation}
where $u'$ (the so-called \emph{shape derivative of }$u$) is the solution to the following boundary value problem:
\begin{equation}\label{u'}
    \begin{cases}
        -{\rm{div}}(\sigma \nabla u')=0\quad \text{in }\Omega,\\
        u'= -c\ (h\cdot n) \quad \text{on }\partial\Omega.
    \end{cases}
\end{equation}

Based on \eqref{T''}, we give the following definitions:
\begin{definition}\label{def critical shapes}
 Let $(D,\Omega)$ be a solution to \eqref{odp}. We say that 
 \begin{itemize}
     \item $(D,\Omega)$ is a \emph{local maximizer} if for all $\Phi\in\mathcal{A}$, $\restr{\frac{d^2}{dt^2}}{t=0} T_D(\Omega_t)\le 0$,
    \item $(D,\Omega)$ is a \emph{local minimizer} if for all $\Phi\in\mathcal{A}$, $\restr{\frac{d^2}{dt^2}}{t=0} T_D(\Omega_t)\ge0$,
    \item $(D,\Omega)$ is a \emph{saddle shape} if there exists two distinct perturbations in $\mathcal{A}$ for which the quantity $\restr{\frac{d^2}{dt^2}}{t=0} T_D(\Omega_t)$ has opposite signs.
 \end{itemize}
 Moreover, we will add the word ``strict" to the terminologies above, whenever we need to emphasize that the inequalities above are satisfied in the strict sense.
\end{definition}

It is known (see \cite[Theorem 4.5]{cavallina2019stability}) that concentric balls are local maximizers if $\sigma_c>1$ and saddle shapes if $\sigma_c<1$ in the sense of Definition \ref{def critical shapes}. Moreover, by a perturbation analysis relying on the implicit function theorem for Banach spaces \cite{cavallina&yachimura2020two, cavallina2022simultaneous} and a bifurcation analysis by the Crandall--Rabinowitz theorem \cite{cavallina&yachimura2021symmetry}, one can show the existence of many other nontrivial (non radially symmetric) local maximizers and saddle shapes $(D,\Omega)$.  

In what follows, we will show that local minimizers in the sense of Definition \ref{def critical shapes} actually do not exist, proving Theorem \ref{thm I}.

\begin{proof}[Proof of Theorem \ref{thm I}]
Consider the following Neumann-to-Dirichlet operator:
\begin{equation*}
\begin{aligned}
    \Lambda:\quad &L_*^2(\partial\Omega)\to H_*^{1/2}(\partial\Omega)\hookrightarrow L_*^2(\partial\Omega)\\
    & \qquad \xi \longmapsto \restr{v}{\partial\Omega},
\end{aligned}
    \end{equation*}
where the subscript $*$ indicates the subspace of functions with zero average on $\partial\Omega$ and $v\in H^1(\Omega)$ denotes the unique solution to the Neumann boundary value problem
\begin{equation}\label{neumann problem classical}
    \begin{cases}
        -{\rm{div}}(\sigma\nabla v)=0\quad \text{in }\Omega,\\
        \partial_n v=\xi\quad \text{on }\partial\Omega,
    \end{cases}
\end{equation} normalized so that $\int_{\partial\Omega} v =0$. In other words, $v$ is the unique function in $H_*^1(\Omega)$ that satisfies the following weak formulation:
\begin{equation}\label{neumann problem}
\int_\Omega \sigma \nabla v \cdot \nabla\varphi = \int_{\partial\Omega}\xi\varphi\quad \text{for all }\varphi\in H^1(\Omega).
\end{equation}
By the trace theorem \cite[Theorem 1.5.1.3]{grisvard2011elliptic}, $\Lambda$ is a well-defined bounded operator from $L_*^2(\partial\Omega)$ into $H_*^{1/2}(\partial\Omega)$. Thus, by the compactness of the embedding $H_*^{1/2}(\partial\Omega)\hookrightarrow L_*^2(\partial\Omega)$, $\Lambda$ becomes a compact operator from $L_*^2(\partial\Omega)$ into itself. 
Let us now show that $\Lambda:L_*^2(\partial\Omega)\to L_*^2(\partial\Omega)$ is self-adjoint. To this end, take two arbitrary functions $\xi,\eta\in L_*^2(\partial\Omega)$ and let $v,w\in H_*^1(\Omega)$ be the corresponding normalized solutions to the Neumann boundary value problem \eqref{neumann problem}. Taking $w$ and $v$ as test functions in the problems for $v$ and $w$ respectively yields
\begin{equation*}
    \langle\xi, \Lambda\eta\rangle_{L^2(\partial\Omega)} = \int_{\partial\Omega}\xi w = \int_\Omega \sigma \nabla v\cdot \nabla w = \int_{\partial\Omega} v \eta = \langle\Lambda\xi,\eta\rangle_{L^2(\partial\Omega)}, 
\end{equation*}
whence $\Lambda$ is a compact self-adjoint operator from $L_*^2(\partial\Omega)$ into itself. As a result, by the spectral theorem for compact self-adjoint operators \cite[Theorem 6.11]{brezis2011functional}, there exists an orthonormal basis of eigenfunctions $\{\xi_k\}_k\subset L_*^2(\partial\Omega)$ whose eigenvalues $(\lambda_k)_k$ converge to $0$ as $k\to \infty$ (see also \cite[Theorem 6.8]{brezis2011functional}).
If we set $v_k\in H_*^1(\Omega)$ to be the solution to the Neumann boundary value problem \eqref{neumann problem} for $\xi=\xi_k$, we have:
\begin{equation}\label{lambda xi = v}
  \lambda_k \xi_k = \Lambda \xi_k =\restr{v_k}{\partial\Omega}.  
\end{equation}
It is easy to show that $\lambda_k>0$ for all $k$. Indeed, be taking $v_k$ as a test function in \eqref{neumann problem},
\begin{equation*}\label{recap: who's who}
    \lambda_k = \langle \Lambda\xi_k,\xi_k\rangle_{L^2(\partial\Omega)}=\int_\Omega \sigma |\nabla v_k|^2 \ge 0. 
\end{equation*}
Now, if the right-hand side above were equal to zero, the function $v_k\in H_*^1(\Omega)$ would be constant. As a result, \eqref{neumann problem} would imply that $\xi_k\equiv 0$, in contrast with $\xi_k$ being an element of an orthonormal basis.

Finally, by following \cite[Theorem 2.5.1.1]{grisvard2011elliptic}, one can bootstrap the regularity of the functions $v_k$ to the class $W^{m,p}$ near the boundary $\partial\Omega$ for arbitrarily large $m$. As a result, $\xi_k\in C^\infty(\partial\Omega)$ and, by Remark \ref{extension remark}, the function $h_k:= \xi_k n \in C^\infty(\partial\Omega, \mathbb{R}^N)$ can be ``completed" to an element $\Phi_k\in\mathcal{A}$. 

Consider now the solution $u'_k$ to \eqref{u'} with respect to such $\Phi_k$. By comparing problems \eqref{u'} and \eqref{neumann problem classical} with \eqref{lambda xi = v} at hand, we get:
\begin{equation*}
    u'_k=-\frac{c}{\lambda_k}v_k\quad \text{in }\Omega.
\end{equation*}
Thus, by \eqref{lambda xi = v}, 
\begin{equation*}
 \partial_n u'_k = -\frac{c}{\lambda_k}\partial_n v_k = -\frac{c}{\lambda_k}\xi_k \quad \text{on }\partial\Omega.   
\end{equation*}
Finally, by \eqref{T''}, we can estimate the value of the second order derivative of $T_D(\Omega_t)$ corresponding to the perturbation $\Phi_k$ as follows:
\begin{equation*}
    \restr{\frac{d^2}{dt^2}}{t=0} T_D(\Omega_t) = 2c\int_{\partial\Omega} \frac{-c}{\lambda_k}\xi_k^2 + 2c\int_{\partial\Omega}\partial^2_{nn}u\ \xi_k^2\le -\frac{2c^2}{\lambda_k} + 2c \min_{\partial\Omega}\partial^2_{nn}u,  
\end{equation*}
where, in the last inequality, we have used the fact that $c<0$.
Notice that, for sufficiently large $k$, the right-hand side above becomes negative. Hence, $(D,\Omega)$ cannot be a local minimizer in the sense of Definition \ref{def critical shapes}, concluding the proof of Theorem \ref{thm I}.
\end{proof}

\section{Proof of Theorem \ref{thm II}}\label{sec pf II}

In this section, we will show Theorem \ref{thm II}. To this end, first, we will give the precise definition of ``\emph{having a tentacle}".

Let $(D,\Omega)$ be a solution to the overdetermined problem \eqref{odp} and, (if necessary) up to a change of coordinates, let $\Pi_\lambda$ denote the hyperplane
\begin{equation*}
    \Pi_\lambda:= \setbld{(\lambda, x_2,\dots, x_N)}{(x_2,\dots, x_N)\in \mathbb{R}^{N-1}}.
\end{equation*}
In what follows we are going to move the hyperplane $\Pi_\lambda$ by slowly decreasing the parameter $\lambda\in\mathbb{R}$. 
As a starting position, assume that $\lambda$ is large enough so that $\overline{\Omega}$ sits on the ``left" of $\Pi_\lambda$, that is $\overline\Omega\subset\setbld{(x_1,\dots,x_N)\in \mathbb{R}^N}{x_1<\lambda}$. Then it is possible to move $\Pi_\lambda$ to the ``left" by decreasing the value of $\lambda$ until $\partial\Omega$ and $\Pi_\lambda$ first touch for some $\lambda=\lambda_0$. Now, for even smaller values of $\lambda$, we obtain a nonempty portion of $\Omega$ lying on the ``right" side of $\Pi_\lambda$. We are interested in the reflection of this portion with respect to the hyperplane $\Pi_\lambda$:
\begin{equation*}
\Omega_\lambda:=\setbld{x_\lambda\in \mathbb{R}^N}{x=(x_1,\dots, x_N)\in \Omega, \quad x_1>\lambda},
\end{equation*}
where $x_\lambda$ denotes the reflection of the point $x$ with respect to the hyperplane $\Pi_\lambda$:
\begin{equation*}
    x_\lambda=(x_1,\dots, x_N)_\lambda:= (2\lambda-x_1, x_2, \dots, x_N).
\end{equation*}
Notice that, since $\partial\Omega$ is sufficiently smooth by Remark \ref{rmk boundary}, the set $\Omega_\lambda$ lies inside $\Omega\setminus\overline{D}$ if $\lambda$ is smaller but sufficiently close to $\lambda_0$. One can keep decreasing the value of $\lambda$ until (at least) one of the following happens:
\begin{enumerate}[label=(\roman*)]
    \item $\overline{\Omega_\lambda}$ touches $\partial \overline D$ at some point $P$.
    \item $\overline{\Omega_\lambda}$ touches $\partial \Omega$ at some point $P\notin\Pi_\lambda$.
    \item $\Pi_\lambda$ meets $\partial\Omega$ orthogonally at some point $P$.
\end{enumerate}

\begin{definition}\label{def tentacles}
We say that the pair $(D,\Omega)$ ``\emph{has a tentacle}" if there exists some change of coordinates such that either case $(ii)$ or case $(iii)$ above occurs. On the other hand, if, for all changes of coordinates, cases $(ii)-(iii)$ never occur, we say that the pair $(D,\Omega)$ ``\emph{has no tentacles}".
\end{definition}

\begin{figure}
    \centering
    \includegraphics[width=\linewidth]{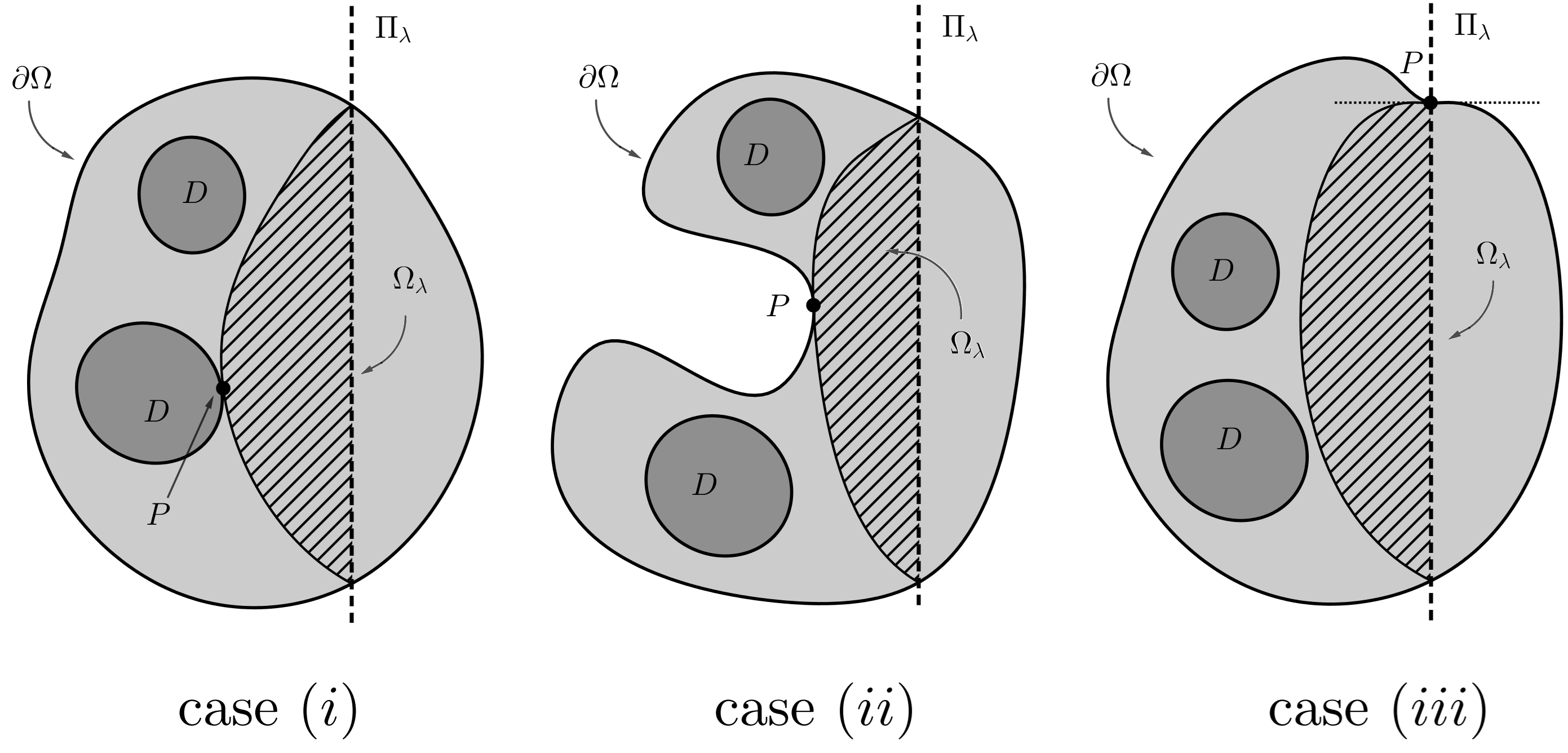}
    \caption{A depiction of the three cases described in Definition \ref{def tentacles}}
    \label{fig:threecases}
\end{figure}

\begin{proof}[Proof of Theorem \ref{thm II}]
By way of contradiction, suppose that there exists a change of coordinates for which either case $(ii)$ or case $(iii)$ happens. 
In either case, let $\lambda$ denote the exact value of the parameter at which either case $(ii)$ or case $(iii)$ happens and consider the following auxiliary function:
\begin{equation*}
w_\lambda:=u-u_\lambda\quad \text{in } \Omega_\lambda,
\end{equation*}
where 
\begin{equation*}
u_\lambda(x):=u(x_\lambda), \quad \text{for }x\in \Omega_\lambda.    
\end{equation*}

By construction, $w_\lambda$ solves the following boundary value problem:
\begin{equation}\label{eq w_la}
    \begin{cases}
        \Delta w_\lambda =0 \quad \text{in }\Omega_\lambda,\\
        w_\lambda=0\quad \text{on }\partial\Omega_\lambda\cap \Pi_\lambda,\\
        w_\lambda=u\ge 0\quad \text{on }\partial\Omega_\lambda\setminus\Pi_\lambda.
    \end{cases}
\end{equation}
Moreover, notice that, since $D\ne \emptyset$, $\Omega$ is not symmetric with respect to $\Pi_\lambda$, thus, there exists some point $Q\in \partial\Omega_\lambda\cap\Omega\setminus\Pi_\lambda$. In particular, $w_\lambda(Q)>0$ and thus $w_\lambda$ is strictly positive inside $\Omega_\lambda$. 

First, let us assume that case $(ii)$ happens.
In this case, the harmonic function $w_\lambda$ attains its minimum at the boundary point $P$, where it vanishes. Thus, by Hopf's boundary lemma, $\partial_n w_\lambda<0$ at $P$. On the other hand, since $u$ solves \eqref{odp} by assumption, we have
\begin{equation*}
    \partial_n w_\lambda(P)=\partial_n u(P) - \partial_n u(P_\lambda)= c-c=0,
\end{equation*}
leading to a contradiction.

 Let now assume that case $(iii)$ happens. 
In this case, Hopf's boundary lemma is insufficient and we will resort to Serrin's corner Lemma \cite[Lemma 1]{serrin1971}. We will show that, if case $(iii)$ happens, the point $P$ is a second-order zero for $w$, that is, $w$ and all its first and second-order derivatives vanish at $P$. If this is the case, then \cite[Lemma 1]{serrin1971} would imply $w\equiv 0$ in $\Omega_\lambda$, leading to a contradiction. 

We will now show that $P$ is a second order zero for the function $w_\lambda$. To this end, let us consider a coordinate system $(\xi,\eta)=(\xi_1,\dots, \xi_{N-1}, \eta)\in \mathbb{R}^{N-1}\times \mathbb{R}$ with the origin at $P$, the $\eta$ axis pointing in the direction $n$ and the $\xi_1$ axis orthogonal to $\Pi_\lambda$. In what follows, for any given scalar function $\varphi$, the ``full" gradient and Hessian matrix with respect to the $N$ variables $(\xi,\eta)=(\xi_1,\xi_2,\dots, \xi_{N-1}, \eta)$ will be denoted by $\nabla \varphi$ and $D^2\varphi$ respectively, while $\nabla_\xi \varphi$ and $D^2_\xi \varphi$ will denote the following:
\begin{equation*}
        \nabla_\xi \varphi:=\left(\partial_{\xi_1}\varphi,\partial_{\xi_2}\varphi,\dots, \partial_{\xi_{N-1}}\varphi \right), \quad D_\xi^2 \varphi:= \left(\partial^2_{\xi_i, \xi_j}\varphi \right)_{1\le i,j\le N-1}.
\end{equation*}

There exists a smooth function $f:\mathbb{R}^{N-1}\to \mathbb{R}$ such that a small portion of $\partial\Omega$ near $P$ can be locally expressed as the graph of $f$, as follows: 
\begin{equation*}
\setbld{\big(\xi, f(\xi)\big)\in \mathbb{R}^{N-1}\times\mathbb{R}}{ \vert\xi\rvert <\varepsilon}.
\end{equation*}
In particular, since we are assuming that $\Pi_\lambda$ meets $\partial\Omega$ orthogonally at $P$, we have 
\begin{equation}\label{gr f =0}
  \nabla_\xi f(0)=0.
\end{equation}
Using the local parametrization introduced above, the condition $u\equiv 0$ on $\partial\Omega$ can be rewritten as 
\begin{equation}\label{u=0}
u(\xi,f(\xi))=0,\quad \text{for }|\xi|<\varepsilon.
\end{equation}
Now, differentiating \eqref{u=0} with respect to $\xi$ yields 
\begin{equation}\label{u=0 diff}
    \nabla_\xi u(\xi,f(\xi))+\partial_{\eta} u(\xi, f(\xi))\nabla_\xi f(\xi)=0.
\end{equation}
Computing \eqref{u=0 diff}
at the point $P$ with \eqref{gr f =0} at hand yields $\nabla_\xi u(P)=0$. This fact, combined with $c=\partial_\eta u(P)=\partial_\eta u_\lambda(P)$, ensures that all first derivatives of $u$ and $u_\lambda$ coincide at $P$, thus $\nabla w_\lambda(P)=0$, as claimed.  

In what follows, we will show that also $D^2 w(P)=0$. To this end, notice that in the new coordinate system, we have
\begin{equation*}
u_\lambda(\xi_1, \xi_2\dots, \xi_{N-1},\eta)=u(-\xi_1,\xi_2,\dots, \xi_{N-1},\eta).
\end{equation*}
In particular, this implies
\begin{equation*}
\begin{aligned}
\partial^2_{ \ell\ell} u(P)=\partial^2_{\ell \ell} u_\lambda(P) \quad \text{for }\ell\in\{\xi_1,\dots, \xi_{N-1}, \eta\},\\
\partial_{\xi_1}\partial_{ \ell} u(P)=-\partial_{\xi_1}\partial_{ \ell} u_\lambda(P) \quad \text{for }\ell\in\{\xi_2,\dots, \xi_{N-1}, \eta\}.
\end{aligned}
\end{equation*}
We are now left to show that all mixed derivatives with respect to $\xi_1$ and $\ell\in\{\xi_2,\dots, \xi_{N-1}, \eta\}$ of $u$ at $P$ vanish.
To this end, differentiate \eqref{u=0 diff} with respect to $\xi$ to get 
\begin{equation*}
   D^2_\xi u+ 2 \partial_\eta \nabla_\xi u\otimes \nabla_\xi f + \partial^2_{\eta \eta } u \nabla_\xi f \otimes \nabla_\xi=0. 
\end{equation*}
Evaluating this at the point $P$ with \eqref{gr f =0} at hand yields the desired 
\begin{equation*}
    D^2_\xi u(P)=0. 
\end{equation*}
This leaves us with just one equality to show, that is $\partial_{\xi_1}\partial_\eta u(P)=0$. To this end, notice that the outward normal $n$ admits the local expression
\begin{equation*}
n(\xi,f(\xi))=\frac{\left(-\nabla_\xi f(\xi),1\right)}{\sqrt{1+|\nabla_\xi f(\xi)|^2}}, \quad\text{for } |\xi|<\varepsilon.
\end{equation*}
Therefore, the overdetermined condition $\partial_n u\equiv c$ on $\partial\Omega$ in local coordinates reads
\begin{equation}\label{dnuc}
-\nabla_\xi u(\xi,f(\xi))\cdot\nabla_\xi f(\xi)+\partial_\eta u(\xi,f(\xi))=c\sqrt{1+|\nabla_\xi f(\xi)|^2}, \quad\text{for } |\xi|<\varepsilon.
\end{equation}
Differentiating \eqref{dnuc} with respect to $\xi$ yields
\begin{equation*}
-D^2_\xi u \nabla_\xi f - (\partial_\eta \nabla_\xi u \otimes \nabla_\xi f) \nabla_\xi f + \nabla_\xi \partial_\eta u + \partial^2_{\eta \eta}u \nabla_\xi f = \frac{c D^2_\xi f \nabla_\xi f}{\sqrt{1+|\nabla_\xi f|^2}}. 
\end{equation*}
Finally, evaluating the above at $P$ with \eqref{gr f =0} at hand yields $\nabla_\xi \partial_\eta u(P)=0$, whence, in particular, the desired $\partial_{\xi_1}\partial_\eta u(P)=0$.
To summarize, in the above, we have shown that all first and second-order derivatives of $u$ and $u_\lambda$ coincide at $P$, whence 
\begin{equation*}
    \nabla w_\lambda(P), \quad w_\lambda(P)=0, \quad D^2 w_\lambda(P)=0.
\end{equation*}
As remarked before, by Serrin's corner Lemma \cite[Lemma 1]{serrin1971}, this is only possible if $w_\lambda\equiv 0$ in $\Omega_\lambda$. As this contradicts the fact that $w_\lambda$ is strictly positive inside $\Omega_\lambda$, the proof is completed.
\end{proof}

\section{Proof of Theorem \ref{thm III}}\label{sec pf III}
In what follows, we will show Theorem \ref{thm III}. Let $(D,\Omega)$ be a solution to the overdetermined problem \eqref{odp}. Recall that, by Remark \ref{rmk boundary}, $\partial\Omega$ is an analytic surface. 

If $\partial\Omega$ were to contain a flat part, then, by unique continuation, $\partial\Omega$ would contain a hyperplane, contradicting the fact that $\Omega$ is bounded. 

Similarly, if $\partial\Omega$ were to contain a portion of a sphere, then it would contain an entire sphere $S$ as a subset (where both $u\equiv 0$ and $\partial_n u \equiv c$ hold). Now, by the Cauchy--Kovalevskaya theorem \cite[Subsection 4.6.3]{Evans2010PartialDifferentialEquations}, $u$ is radial (with respect to the center of $S$, which, without loss of generality, we will assume to be the origin) in a neighborhood of $S$. Then, since $u$ is real analytic in the connected open set $\Omega\setminus\overline{D}$, $u$ must be radial in the whole $\Omega\setminus\overline{D}$. As $\partial \Omega$ is a level set of $u$, this leaves us with two possibilities:
\begin{equation*}
(i)\quad \text{$\Omega$ is a ball $\{|x|<R\}$} \quad \text{or}\quad (ii)\quad\text{$\Omega$ is an annulus $\{R_1<|x|<R_2\}$.}    
\end{equation*}

First, we will show that $(ii)$ cannot occur. Indeed, if $(ii)$ were true, the function $U(|x|):=u(x)$ would solve
\begin{equation*}
U''+\frac{N-1}{r}U' = -1 \quad \text{in }(R_1,R_2), \quad U(R_1)=U(R_2)=0.
\end{equation*}
Thus, by elementary computations, for any $x\in \overline{\Omega}\setminus D$ we would have:
\begin{equation*}
u(x)= U(|x|)= \begin{cases}
    \frac{R_1^2-R_2^2}{2N(R_1^{2-N}-R_2^{2-N})}|x|^{2-N}+\frac{R_1^2 R_2^2 (R_1^{-N}-R_2^{-N})}{2N(R_1^{2-N}-R_2^{2-N})}- \frac{|x|^2}{2N}\quad &\text{if }N\ge 3,\\[7pt]
    \frac{R_1^2 R_2^2}{4\log \frac{R_1}{R_2}}\log |x| + \frac{R_2^2\log R_1 - R_1^2 \log R_2}{4 \log \frac{R_1}{R_2}}-\frac{|x|^2}{4}\quad &\text{if }N=2.
\end{cases}
\end{equation*}
Nevertheless, it is easy to check that the function above does not satisfy 
\begin{equation*}
\restr{\partial_n u}{|x|=R_1} =\restr{\partial_n u}{|x|=R_2}    
\end{equation*}
for any choice of $0<R_1<R_2$, contradicting the fact that $u$ must solve \eqref{odp}.

As a result, the only possibility left is that $\Omega$ is a ball, in which case we conclude by Sakaguchi's symmetry result \cite[Theorem 5.1]{sakaguchibessatsu}. 

\section{Proof of Theorem \ref{thm IV}}\label{sec pf IV}
In what follows, we will show Theorem \ref{thm IV}. 
Assume that there exist two distinct positive constants $\alpha\ne \beta$ such that the overdetermined problem \eqref{odp} is solvable in the same pair $(D,\Omega)$ for both values $\sigma_c=\alpha,\beta$. Let $u_\alpha, u_\beta\in H_0^1(\Omega)$ denote the respective solutions to the following. 
\begin{equation}\label{u_al u_be weak form}
\begin{aligned}
  \alpha\int_D \nabla u_\alpha\cdot\nabla \varphi + \int_{\Omega\setminus\overline{D}} \nabla u_\alpha\cdot \nabla\varphi = \int_\Omega \varphi \quad \text{for all }\varphi\in H_0^1(\Omega), \\
  \beta\int_D \nabla u_\beta\cdot\nabla \varphi + \int_{\Omega\setminus\overline{D}} \nabla u_\beta\cdot \nabla\varphi= \int_\Omega \varphi \quad \text{for all }\varphi\in H_0^1(\Omega). 
\end{aligned}
\end{equation}
As \eqref{odp} is solvable by assumption, both $u_\alpha$ and $u_\beta$ satisfy the overdetermined condition 
\begin{equation}\label{dnu_a=dnu_b=c}
    \partial_n u_\alpha\equiv\partial_n u_\beta\equiv c \quad \text{on } \partial\Omega.
\end{equation}
As a result of \eqref{dnu_a=dnu_b=c}, since $\partial\Omega$ is an analytic surface by Remark \ref{rmk boundary}, the Cauchy--Kovalevskaya theorem \cite[Subsection 4.6.3]{Evans2010PartialDifferentialEquations} and the identity theorem for real analytic functions imply that $u_\alpha$ and $u_\beta$ coincide inside the connected set $\Omega\setminus\overline{D}$. In particular, since $u_\alpha$ and $u_\beta$ are continuous inside $\Omega$ by \cite[Theorem 8.22]{GilbargTrudinger}, equality extends to their traces on $\partial(\Omega\setminus\overline{D})\cap \Omega=\partial\overline{D}=\partial D$: 
\begin{equation}\label{traces are equal}
    \restr{u_\alpha}{\partial D}=\restr{u_\beta}{\partial D}.
\end{equation} 
Now, consider the function $v:=\alpha u_\alpha-\beta u_\beta\in H_0^1(\Omega)$. By subtracting both rows in \eqref{u_al u_be weak form} and taking $\varphi=v$, we get
\begin{equation*}
    \int_D |\nabla v|^2=0,
\end{equation*}
whence $v$ is locally constant in $D$. 
Following the definition of $v$, we get  
\begin{equation}\label{trace of v}
\restr{v}{\partial D} = \restr{(\alpha u_\alpha-\beta u_\beta)}{\partial D} = (\alpha-\beta)\restr{u_\alpha}{\partial D},
\end{equation}
where we have used \eqref{traces are equal} in the last equality.

Consider now the function $w\in H_0^1(\Omega)$ defined as
\begin{equation*}
    w:= u_\alpha + (\alpha-1)E_D\left(\restr{u_\alpha}{D}-\frac{1}{\alpha-\beta}\restr{v}{D}\right),
\end{equation*}
where $E_D: H_0^1(D)\hookrightarrow H_0^1(\Omega)$ denotes the extension-by-zero operator (see \cite[3.26 and the subsequent results]{adams2003sobolev}).
As $v$ is locally constant in $D$, we have
\begin{equation*}
    \nabla w = \begin{cases}
        \alpha\nabla u_\alpha\quad \text{in } D,\\
        \nabla u_\alpha\quad \text{in } \Omega\setminus\overline{D}.
    \end{cases}
\end{equation*}
Therefore, by the first identity in \eqref{u_al u_be weak form} it is immediate to see that \begin{equation*}
    \int_\Omega \nabla w \cdot\nabla\varphi = \int_\Omega \varphi \quad \text{for all }\varphi\in H_0^1(\Omega).
\end{equation*}
 Also, since $w$ coincides with $u_\alpha$ in $\Omega\setminus\overline{D}$, we have $\partial_n w\equiv \partial_n u_\alpha \equiv c$ on $\partial\Omega$. In other words, $\Omega$ solves the one-phase Serrin's overdetermined problem. As a result, $\Omega$ is a ball (of radius $R=-cN$) and thus
\begin{equation}\label{u_a u_b outside}    u_\alpha(x)=u_\beta(x)=w(x)=\frac{R^2-|x|^2}{2N}\quad \text{for }x\in \Omega\setminus\overline{D}.
\end{equation}
Moreover, by continuity, the expression in \eqref{u_a u_b outside} holds true up to the boundary $\partial(\Omega\setminus\overline{D})=\partial\Omega\cup \partial\overline{D}=\partial\Omega\cup\partial D$.  
Combining this with \eqref{trace of v} yields:
\begin{equation}\label{trace v radial}
    v(x)= (\alpha-\beta)\frac{R^2-|x|^2}{2N}\quad \text{for }x\in\partial D.
\end{equation}
Recall now that the continuous function $v$ is locally constant in $D$, thus being constant on the boundary of each connected component of $D$. By \eqref{trace v radial}, the boundary of any connected component of $D$ lies inside some sphere concentric with $\partial\Omega$. We conclude that $D$ must have exactly one connected component: a ball concentric with $\Omega$. This concludes the proof of the theorem.

\bibliographystyle{ieeetr}
\bibliography{references}

\noindent
\textsc{
Mathematical Institute, Tohoku University, Sendai 980-8578, Japan}\\
\noindent
{\em Electronic mail address:}
cavallina.lorenzo.e6@tohoku.ac.jp

\end{document}